\numberwithin{equation}{section}
\newcounter{dummy} \numberwithin{dummy}{section}
\newtheorem{definition}[dummy]{Definition}
\newtheorem{lemma}[dummy]{Lemma}
\newtheorem{theorem}[dummy]{Theorem}
\newtheorem{corollary}[dummy]{Corollary}
\newtheorem{proposition}[dummy]{Proposition}
\newtheorem{*remark}[dummy]{Remark}
\newtheorem{notation}[dummy]{Notations}
\newtheorem*{claim}{Claim}
\newcommand{\M}{\mathrm{M}}
\newcommand{\R}{\mathbb{R}}
\newcommand{\Z}{\mathbb{Z}}
\newcommand{\x}{\mathbf{x}}
\newcommand{\I}{\mathrm{I}}
\newcommand{\dd}{\mathrm{d}}
\begin{document}

\title{Equidistribution of expanding curves in homogeneous spaces and Diophantine approximation for square matrices}

\author{Lei Yang $^{\ast}$}
\address{Mathematical Sciences Research Institute, Berkeley, CA, 94720, U.S.A.}

\curraddr{Einstein Institute of Mathematics, Hebrew University of Jerusalem, Jerusalem, 9190401, Israel}
\email{yang.lei@mail.huji.ac.il}
\thanks{$^{\ast }$ Supported in part by a Postdoctoral Fellowship at MSRI}



\date{}


\keywords{Equidistribution, Homogeneous spaces, Ratner's Theorem, Diophantine approximation, Dirichlet's Theorem}

\begin{abstract}
In this article, we study an analytic curve $\varphi: I=[a,b]\rightarrow \mathrm{M}(n\times n, \R)$ 
in the space of $n$ by $n$ real matrices, and show that if $\varphi$ satisfies certain geometric conditions, then 
for almost every point on the curve, the Diophantine approximation given by Dirichlet's Theorem is not improvable. To do this, 
we embed the curve into some homogeneous space $G/\Gamma$, and prove that under the action of some expanding 
diagonal flow $A= \{a(t): t \in \R\}$, the expanding curves tend to be equidistributed in $G/\Gamma$, as $t \rightarrow +\infty$. This 
solves a special case of a problem proposed by Nimish Shah in ~\cite{Shah_1}.
\end{abstract}

\maketitle


\section{Introduction}
\subsection{Dirichlet's Theorem on Diophantine approximation}
For any real vector space $\R^k$ and $\x \in \R^k$, let $\|x\|$ denote the maximal norm of $\x$, i.e., if 
$\x =(x_1,x_2,\dots, x_k)$,  $\|\x\|:=\max_{1\leq i\leq k}|x_i|$.  
Given two positive integers $m$ and $n$, and a $m$ by $n$ matrix $\Phi \in \mathrm{M}(m\times n, \R)$, concerning Diophantine approximation property of $\Phi$, we have the following well known theorem due to Dirichlet:
\begin{theorem}[Dirichlet's Theorem]

 Given any $\Phi \in \mathrm{M}(m\times n, \R)$ and any large $N>0$, there exists nonzero integer vector $\mathbf{p} \in \Z^n$ with 
 $\|\mathbf{p}\| \leq N^m$ and integer vector $\mathbf{q} \in \Z^m$ such that $\|\Phi \mathbf{p} -\mathbf{q}\| \leq N^{-n}$.
 
\end{theorem}

\par Now we consider the following finer question: for a particular $m$ by $n$ matrix $\Phi$, could we improve Dirichlet's Theorem? By improving Dirichlet's 
Theorem, we mean there exists a constant $0 < \mu <1$, such that for all large $N > 0$, there exists nonzero integer vector $\mathbf{p} \in \Z^n$ with
$\|\mathbf{p}\| \leq \mu N^m$, and integer vector $\mathbf{q} \in \Z^m$ such that $\|\Phi \mathbf{p} - \mathbf{q}\| \leq \mu N^{-n}$. 
If such constant
$\mu$ exists, then we say $\Phi$ is $DT_{\mu}$-improvable. And if $\Phi$ is $DT_{\mu}$-improvable for some $0<\mu <1$, then we say $\Phi$ is 
$DT$-improvable (here $DT$ stands for Dirichlet's Theorem). 
\par In 1970, Davenport and Schmidt ~\cite{Daven_Schm} considered this question and answered it to some extend: they proved that almost every matrix $\Phi \in \M(m\times n,\R)$ is not $DT$-improvable. In ~\cite{Daven_Schm}, it was also proved that for $m=1$ and $n=2$, $\M(1\times 2, \R) \cong \R^2$, almost every point on the curve 
$$\phi(s)=(s,s^2):s \in \R$$ is not $DT_{1/4}$ improvable. This result for the particular curve $\phi$ was generalized by Baker ~\cite{Baker}: it was proved that for any 
smooth curve in $\R^2$ satisfying some curvature condition, almost every point on the curve is not $DT_{\mu}$ improvable for some $0<\mu <1$ depending on the curve. In 2002, Bugeaud ~\cite{Bugeaud} generalized the result of Davenport and Schmidt in the following sense: For $m=1$, and general 
$n$, almost every point on the curve $\varphi(s)=(s,s^2,\dots, s^n)$ is not $DT_{\mu}$-improvable for some small constant $0<\mu<1$. Their proofs 
are based on the technique of {\bf regular systems} introduced by Davenport and Schmidt ~\cite{Daven_Schm}.
\par Recently, Dani ~\cite{Dani}, and Kleinbock and Margulis ~\cite{Klein_Mar} established an elegant correspondence between Diophantine approximation and homogeneous dynamics. Based on this correspondence, Kleinbock and Weiss ~\cite{Klein_Weiss} studied this 
Diophantine approximation problem in the language of homogeneous dynamics, and proved the following result: For $m=1$ and arbitrary $n$, if an analytic curve in $\M(1\times n,\R) \cong \R^n$ satisfies some non-degenerate condition, then almost every 
point on the curve is not $DT_{\mu}$-improvable for some small constant $0<\mu<1$ depending on the curve. In 2009, Nimish Shah ~\cite{Shah_2} proved the following stronger result: For $m=1$ and general $n$, if an analytic curve 
$\varphi: I=[a,b] \rightarrow \R^n$ is not contained in any proper affine subspace, then almost every point on the curve is not $DT_{\mu}$-improvable
for any $0<\mu<1$, i.e., almost every point on the curve is not $DT$-improvable.
\par In this article, we will deal with the case of square matrices:
\begin{theorem}
\label{theorem_in_diophantine_part}
For $m=n$, if an analytic curve
$$\varphi: I=[a,b]\rightarrow \M(n\times n, \R)$$
satisfies the following condition: 
\begin{enumerate}[label=\textbf{A.\arabic*}]
 \item \label{1st} Its derivative $\varphi^{(1)}(s)$ is invertible at some $s \in I$.
 \item \label{2nd} There exist $s_0 \in I$ and a subinterval $J_{s_0} \subset I$ such that $\varphi(s)-\varphi(s_0)$ is invertible for $s \in J_{s_0}$, and moreover, 
 $\{(\varphi(s)-\varphi(s_0))^{-1}: s\in J_{s_0}\}$ is not contained in any proper affine subspace of $\M(n\times n, \R)$.
\end{enumerate}
Then almost every point on the curve is not $DT$-improvable.
\end{theorem}
\begin{*remark} $\quad$
\par 1. Since $\varphi$ is analytic, if its derivative $\varphi^{(1)}(s)$ is invertible at some point, then it must be invertible at every point except finite many points.
\par 2. If $\varphi$ satisfies condition \ref{1st} above, then for every $s_0 \in I$ such that $\varphi^{(1)}(s_0)$ is invertible, there always exists some subinterval $J_{s_0}$ of $I$, such 
 that $\varphi(s)-\varphi(s_0)$ is invertible for all $s \in J_{s_0}$. This is because when $s$ is close to $s_0$, the major part of $\varphi(s) - \varphi(s_0)$ is $(s-s_0)\varphi^{(1)}(s_0)$ which is invertible. Thus the essential part of condition \ref{2nd} is that 
 $\{(\varphi(s)- \varphi(s_0))^{-1}: s \in J\}$ is not contained in a proper affine subspace of $\M(n\times n, \R)$. 
\end{*remark}
\subsection{Equidistribution of expanding curves on homogeneous spaces}
\par Theorem \ref{theorem_in_diophantine_part} follows from an equidistribution result in homogeneous dynamics, together with the correspondence between Diophantine approximation and homogeneous dynamics. 
\par Now let us briefly recall the correspondence as follows.
\par Let $G= \mathrm{SL}(m+n,\R)$, and let $\Gamma = \mathrm{SL}(m+n,\Z)$. Then $G/\Gamma$ denotes the space of unimodular lattices of $\R^{m+n}$. 
Every point $g\Gamma$ corresponds to the unimodular lattice $g\Z^{m+n}$. For $r>0$, let $B_{r}$ denote the 
ball in $\R^{m+n}$ centered at the origin and of radius $r$. For any $0<\mu<1$, the subset 
$$K_{\mu}:= \{\Lambda \in G/\Gamma: \Lambda\cap B_{\mu} = \{\mathbf{0}\}\}$$
contains an open neighborhood of $\Z^{m+n}$ in $G/\Gamma$.
Let us define the diagonal subgroup $A=\{a(t): t \in \R\}$ by
$$a(t):= \begin{bmatrix}e^{nt} \I_m & \\ & e^{-mt}\I_n \end{bmatrix}.$$
\par Now we consider the embedding 
              $$u: \M(m\times n, \R) \rightarrow \mathrm{SL}(m+n,\R)$$
              
              $$ \Phi \in \M(m\times n, \R) \mapsto u(\Phi):= \begin{bmatrix}\I_m & \Phi \\ & \I_n \end{bmatrix}.$$
   Suppose for some $0<\mu<1$, and any $N>0$ large enough, there exist nonzero integer vector $\mathbf{p} \in \Z^n$ and integer 
   vector $\mathbf{q}\in \Z^m$ such that $\|\mathbf{p}\| \leq \mu N^m$ and $\|\Phi \mathbf{p} -\mathbf{q}\| \leq \mu N^{-n}$. 
   Then direct calculation shows that the lattice $a(\log N)u(\Phi)\Z^{m+n}$ has a vector $a(\log N)u(\Phi)(-\mathbf{q}, \mathbf{p})$ whose norm is $\leq \mu$, i.e.,
   $a(\log N) u(\Phi) \Z^{m+n} \not\in K_{\mu}$ for all $N>0$ large enough. Thus, to show that $\Phi \in \M(m\times n, \R)$ is not 
   $DT_{\mu}$-improvable,
   it suffices to show that the trajectory $\{a(t)u(\Phi)[e]: t >0\}$ meets $K_{\mu}$ infinitely many times. In particular, for an analytic curve 
   $$\varphi: I=[a,b]\rightarrow \M(m\times n, \R)$$ if we could show that for almost every point $\varphi(s)$ on the curve the trajectory 
   \begin{equation}\label{equ_ae_orbit_dense}\{a(t)u(\varphi(s))[e]: t >0\} \text{ is dense, } \end{equation}
   then we could conclude that almost every $\varphi(s)$ is not $DT$-improvable.
   It turns out that in the case $m=n$, we could prove the following much stronger result:
\begin{theorem}
 \label{goal_thm}
 Let $G$ be a Lie group containing $H = \mathrm{SL}(2n,\R)$, and $\Gamma < G$ be a lattice of $G$. Let $\mu_G$ denote the unique $G$-invariant probability 
 measure on the homogeneous space $G/\Gamma$. Take $x = g\Gamma \in G/\Gamma$ such that its $H$-orbit $Hx$ is dense in $G/\Gamma$. Let us fix the 
 diagonal flow
 $$A = \left\{a(t) = \begin{bmatrix}e^{t}\I_n & \\ & e^{-t}\I_n\end{bmatrix}\right\}.$$
 Let $\varphi: I=[a,b]\rightarrow \M(n\times n, \R)$ be an analytic curve, and embed the curve into $H$ via 
 $$u: X \in \M(n\times n , \R) \mapsto u(X) = \begin{bmatrix}\I_n & X \\ & \I_n\end{bmatrix}.$$
 Let $\mu_t$ denote the normalized Lebesgue measure on the curve $a(t)u(\varphi(I))x \subset G/\Gamma$, i.e., for a 
 compactly supported continuous function $f \in C_c(G/\Gamma)$,
 $$\int f \dd \mu_t := \frac{1}{|I|}\int_{s\in I} f(a(t)u(\varphi(s))x)\dd s.$$
 If the curve $\varphi$ satisfies the conditions \ref{1st} and \ref{2nd} given in Theorem \ref{theorem_in_diophantine_part}, 
 then $\mu_t \rightarrow \mu_G$ as $t \rightarrow +\infty$ in weak-$\ast$ topology, i.e., for any function $f\in C_c(G/\Gamma)$,
 $$\lim_{t\rightarrow +\infty} \frac{1}{|I|} \int_{s \in I} f(a(t)u(\varphi(s))x)\dd s = \int_{G/\Gamma} f \dd \mu_G.$$
\end{theorem}
\begin{*remark} $\quad$
\par 1. The assumption that $Hx$ is dense in $G/\Gamma$ does not reduce the generality of the theorem. In fact, since $H$ is generated by unipotent subgroups contained in $H$, by Ratner's theorem (cf. ~\cite{Ratner}), the closure of $Hx$ must be some homogeneous subspace $F x$ where $F$ is some Lie subgroup of $G$ containing $H$ such that the orbit $Fx$ is closed. Then we can make the assumption hold by replacing $G$, $\Gamma$ and $x = g \Gamma$ by $F$, $g\Gamma g^{-1}$ and $[e]$ respectively.
\par 2. To prove Theorem \ref{theorem_in_diophantine_part}, we only need the above theorem with $G=H=\mathrm{SL}(2n,\R)$, $\Gamma = \mathrm{SL}(2n,\Z)$,
 and $x = [e] = \Z^{2n} \in G/\Gamma$, since the above equidistribution result 
 immediately implies that for almost every $s \in I$, (\ref{equ_ae_orbit_dense}) holds (see ~\cite{Shah_2} for details). 
\par 3. Even in the case $G=H=\mathrm{SL}(2n,\R)$, Theorem \ref{goal_thm} is still much stronger than Theorem \ref{theorem_in_diophantine_part}, since it applies to arbitrary lattice $\Gamma \subset G$.
\end{*remark}

\subsection{Extremity of submanifolds of matrix spaces}
\par Another direction to study Diophantine properties of a real matrix $\Phi \in \M(m\times n, \R)$ is to determine whether $\Phi$ is 
very well approximable. We say $\Phi \in \M(m\times n ,\R)$ is very well approximated if there exists some constant $\delta>0$ such that there
exist infinitely many nonzero integer vectors $\mathbf{p} \in \Z^n$ and integer vecotors $\mathbf{q}\in \Z^m$ such that
$$\|\Phi \mathbf{p} - \mathbf{q}\| \leq \|\mathbf{p}\|^{-n/m-\delta}.$$
A submanifold $\mathcal{U} \subset \M(m\times n,\R)$ is called extremal if with respect to the 
Lebesgue measure on $U$, almost every point is not very well approximated. Based the same correspondence as above, this type of problem can also be studied
from homogenous dynamics. Kleinbock and Margulis ~\cite{Klein_Mar} proved that if a submanifold $\mathcal{U} \subset \M(1\times n ,\R)$ 
is nondegenerate, then $\mathcal{U}$ is extremal.  Kleinbock, Margulis and Wang ~\cite{Klein_Mar_Wang} later gave a necessary 
and sufficient condition of a submanifold of $\M(m\times n, \R)$ being extremal. Recently, Aka, Breuillard, Rosenzweig and de Saxc\'{e} ~\cite{ABRS} gave a family of 
subvarieties of $\M(m\times n, \R)$, and announced a theorem stating that if a submanifold $\mathcal{U} \subset \M(m\times n , \R)$ is 
not contained in any one of the subvarieties given above, then $\mathcal{U}$ is extremal. It turns out that condition \ref{2nd} in 
Theorem \ref{theorem_in_diophantine_part} is stronger than the condition given in ~\cite{ABRS}. We will discuss it in detail in Appendix 
\ref{App:AppendixA}.


\par The article is organized as follows:
\begin{enumerate}
 \item In Section 2, we will follow the argument developed in ~\cite{Shah_1} to show any limit measure $\mu_{\infty}$ of
 $\{\mu_t: t>0\}$ is a probability measure and is invarant under some unipotent subgroup.
 \item In Section 3, we will apply Ratner's theorem and linearization technique to show that if the limit measure $\mu_{\infty}$ 
 is not $\mu_G$, then the curve $\varphi(I)$ must satisfy some linear algebraic condition concerning a particular finitely dimensional 
 representation $V$ of $H= \mathrm{SL}(2n,\R)$.
 \item In Section 4, we will complete the proof of Theorem \ref{goal_thm}. The proof is based on the linear algebraic condition we 
 get in Section 3, and a technical lemma proved in ~\cite{Yang_1} concerning the representations of $\mathrm{SL}(2,\R)$.
\end{enumerate}
\begin{notation}
 In this article, we will use the following notations: for $\epsilon > 0$ small, 
 and two quantities $A$ and $B$, $A \overset{\epsilon}{\approx} B$ means that 
 $|A-B| \leq \epsilon$. Fix a right $G$-invariant metric $d(\cdot, \cdot)$ on $G$, then for $x_1, x_2 \in G/\Gamma$,
 and $\epsilon >0$, $x_1 \overset{\epsilon}{\approx} x_2$ means $x_2 =g x_1$ such that $d(g,e)< \epsilon$. Given some quantity $A >0$, we denote by $O(A)$ some quantity $B$ such that $|B| \leq C A$ for some contant $C >0$. 
\end{notation}
\noindent {\bf Acknowledgement: }I would like to express my deep gratitude to my advisor, Professor Nimish Shah, for suggesting this problem to me, and 
his continuous advise and support during the process of the work. I also would like to thank Professor Kleinbock for reading 
an earlier version of this article and giving me a lot of comments and suggestions, for example, drawing ~\cite{Klein_Mar_Wang}
and ~\cite{ABRS} to my attention and suggesting me to figure out the relation between the geometric conditions given in this 
article and the arithmetic condition given in ~\cite{ABRS}.
\par Thanks are due to the referee for many useful suggestions.
\section{Non-divergence of limit measures and unipotent invariance}
\subsection{Preliminaries on Lie group structures}
\label{preliminaries}
At first we recall some basic facts about the group $H= \mathrm{SL}(2n,\R)$.
\par 
Let $A \subset H$ denote the diagonal subgroup as before, and let $Z_H(A)$ denote the centraliser of $A$ in $H$. Then
$$Z_H(A)= \left\{\begin{bmatrix}B & \\ & C\end{bmatrix}: B , C \in \mathrm{GL}(n,\R), \text{ and } \det B \det C =1\right\}.$$
\par Let $U^{+}(A)$ denote the expanding horospherical subgroup of $H$ with respect to the conjugate action of $A$, i.e., 
$$U^+ (A) := \{h \in H: a(-t) h a(t) \rightarrow e \text{ as } t \rightarrow +\infty\}.$$
Let $U^-(A)$ denote the contracting horospherical subgroup of $H$ with respect to the action of $A$ defined similarly. It is easily seen that 
$$U^{+}(A) = \left\{ u(X) = \begin{bmatrix}\I_n & X \\ & \I_n \end{bmatrix}: X \in \M(n\times n, \R)\right\},$$
and
$$U^{-}(A) = \left\{u^{-}(X) = \begin{bmatrix}\I_n & \\ X & \I_n\end{bmatrix}: X \in \M(n\times n, \R)\right\}.$$
For $z \in Z_H(A)$ and $u(X) \in U^{+}(A)$, the conjugate $z u(X) z^{-1}$ is still in $U^{+}(A)$. Let us denote $z u(X) z^{-1} = u(z \cdot X)$. Then it defines an action of 
$Z_H(A)$ on $U^{+}(A)$. It is easy to check that if
$$z = \begin{bmatrix}B & \\ & C\end{bmatrix},$$
then $z\cdot X = BXC^{-1}$. Similarly we can define the action of $Z_H(A)$ on $U^{-}(A)$.
\par For any $X \in \mathrm{GL}(n,\R)$, we consider the following three elements in the Lie algebra $\mathfrak{h}$ of $H$: 
$$\mathfrak{n}^{+}(X) := \begin{bmatrix} \mathbf{0} & X \\ \mathbf{0} & \mathbf{0} \end{bmatrix}, $$
$$\mathfrak{n}^{-}(X) : = \begin{bmatrix} \mathbf{0} & \mathbf{0} \\ X^{-1} & \mathbf{0}\end{bmatrix},$$
and 
$$\mathfrak{a} := \begin{bmatrix}\I_n & \mathbf{0} \\ \mathbf{0} & -\I_n\end{bmatrix}.$$
Then $\{\mathfrak{n}^{+}(X), \mathfrak{n}^{-}(X), \mathfrak{a}\}$ makes a $\mathfrak{sl}(2,\R)$ triple. Therefore, there is a embedding 
of $\mathrm{SL}(2,\R)$ into $H$ that sends $\begin{bmatrix}1 & 1 \\ 0 & 1\end{bmatrix}$ to $\exp(\mathfrak{n}^{+}(X))$, 
$\begin{bmatrix}1 & 0 \\ 1 & 1\end{bmatrix}$ to $\exp(\mathfrak{n}^{-}(X))$, and $\begin{bmatrix}e^t & 0 \\ 0 & e^{-t}\end{bmatrix}$ 
to $\exp(t \mathfrak{a}) = a(t)$. We call the image of this $\mathrm{SL}(2,\R)$ embedding $\mathrm{SL}(2,X) \subset H$. Let us denote 
$$\sigma(X) := \begin{bmatrix} & -X \\ X^{-1} &  \end{bmatrix} \in \mathrm{SL}(2,X),$$
it is easy to see that $\sigma(X)$ corresponds to $\begin{bmatrix}0 & -1 \\ 1 & 0\end{bmatrix} \in \mathrm{SL}(2,\R)$.
\subsection{Unipotent invariance of limit measures}
\par Recall that for $t >0$, $\mu_t$ denotes the normalized Lebesgue measure on the curve $a(t) u(\varphi(I))x$, and $\mu_G$ denote the unique $G$ invariant probability measure on $G/\Gamma$. Our aim is to prove that as $\mu_t \rightarrow \mu_G$ as $t \rightarrow +\infty$. However, due to a technical reason, it is hard to prove $\mu_t \rightarrow \mu_G$ directly. Instead, we 
need to modify the measures $\mu_t$ to another measure $\lambda_t$. We at first prove if $\lambda_t \rightarrow \mu_G$,
then $\mu_t \rightarrow \mu_G$, and then prove that $\lambda_t \rightarrow \mu_G$.
\par The measure $\lambda_t$ is defined as follows:
\begin{definition}
 For $t>0$ and a subinterval $J \subset I$, suppose that the derivative $\varphi^{(1)}(s) \in \mathrm{GL}(n,\R)$ for all $s \in J$.
 We define an analytic curve $z: J \rightarrow Z_H(A)$ such that $z(s)\cdot\varphi^{(1)}(s) = \I_n$ for all $s \in J$. Then we define 
 $\lambda^{J}_t$ to be the normalized Lebesgue measure on $\{z(s)a(t)u(\varphi(s))x: s \in J\}$, i.e., for $f\in C_c(G/\Gamma)$,
 $$\int f \dd \lambda^{J}_t : = \frac{1}{|J|}\int_{s\in J} f(z(s)a(t)u(\varphi(s))x)\dd s.$$
 Since $\varphi$ is analytic and satisfies condition \ref{1st} in Theorem \ref{theorem_in_diophantine_part}, 
 there are at most finitely many points where $\varphi^{(1)}(s)$ is not invertible. Thus we could cut $I$ into several open subintervals 
 $J_1, J_2, \dots, J_k$, such that $\varphi^{(1)}(s)$ is invertible for $s$ in any of these subintervals. Then we define the measure 
 $\lambda_t$ to be 
 $$\lambda_t := \sum_{i=1}^k \frac{|J_i|}{|I|} \lambda^{J_i}_t.$$
\end{definition}
\begin{*remark}
\par $\quad$
 
  \par 1. The above modification is due to Nimish Shah ~\cite{Shah_1} and ~\cite{Shah_2}.
  \par 2. For a subinterval $J\subset I$, we similarly define $\mu^{J}_t$ to be the normalized Lebesgue measure on 
  $a(t)u(\varphi(J))x$.

\end{*remark}
\begin{proposition}
\label{prop_lambda_mu}
 Suppose for any $J\subset I$ where $\lambda^{J}_t$ is defined, i.e., $\varphi^{(1)}(s) \in \mathrm{GL}(n,\R)$ for all $s \in J$,   we have $\lambda^{J}_t \rightarrow \mu_G$ as $t \rightarrow +\infty$.
 Then $\mu_t \rightarrow \mu_G$ as $t \rightarrow +\infty$.
\end{proposition}
\begin{proof}
 For any fixed $f \in C_c(G/\Gamma)$ and $\epsilon >0$, since $f$ is uniformly continuous, there exists a constant $\delta>0$, such that if $x_1 \overset{\delta}{\approx} x_2$
 then $f(x_1) \overset{\epsilon}{\approx} f(x_2)$.
 \par We cut $I$ into several small open subintervals $J_1, J_2, \dots, J_l$ such that $\lambda^{J_i}_t$ is defined for all $i=1,2,\dots, l$,
 and also, for every $J_i$, $z^{-1}(s_1)z(s_2) \overset{\delta}{\approx} e$ for any $s_1,s_2 \in J_i$. 
 \par Now for a fixed $J_i \subset I$, we choose $s_0 \in J_i$ and define $f_0(x)= f(z^{-1}(s_0)x)$. Then for any $s \in J_i$, because 
 $z^{-1}(s_0)z(s)a(t)u(\varphi(s))x \overset{\delta}{\approx} a(t)u(\varphi(s))x$, we have
 $$f_0(z(s)a(t)u(\varphi(s))x) = f(z^{-1}(s_0)z(s)a(t)u(\varphi(s))x) \overset{\epsilon}{\approx} f(a(t)u(\varphi(s))x).$$
 Therefore 
 $$\int f_0 d \lambda^{J_i}_t \overset{\epsilon}{\approx} \int f d \mu^{J_i}_t.$$
 Because $\int f_0 d \lambda^{J_i}_t \rightarrow \int_{G/\Gamma} f_0(x) d\mu_G(x)$ as $t \rightarrow +\infty$, and 
 $\int_{G/\Gamma} f_0(x) d\mu_G(x) = \int_{G/\Gamma}f(z^{-1}(s_0)x) d \mu_{G}(x) = \int_{G/\Gamma} f(x) d\mu_G$ (because $\mu_G$ is
 $G$-invariant), we have that there exists a constant $T_i>0$, such that for $t> T_i$, 
 $$\int f_0 d\lambda^{J_i}_t \overset{\epsilon}{\approx} \int_{G/\Gamma} f d\mu_G.$$
 Therefore 
 $$\int f d\mu^{J_i}_t \overset{2\epsilon}{\approx} \int_{G/\Gamma} f d\mu_G,$$
 for $t > T_i$.
 Then for $t > \max_{1\leq i\leq l} T_i$, 
 $$\int f d\mu_t \overset{2\epsilon}{\approx} \int_{G/\Gamma} f d \mu_G.$$
 Because $\epsilon>0$ can be arbitrarily small, we complete the proof.
\end{proof}
\begin{*remark}
By Proposition \ref{prop_lambda_mu}, to prove $\mu_t \rightarrow \mu_G $ as $t \rightarrow +\infty$, it suffices to show that for any subinterval $J \subset I$, $\lambda^{J}_t \rightarrow \mu_G$ as $t \rightarrow +\infty$. In particular, if we could prove the equidistribution of $\{\lambda_t : t >0\}$ as $t \rightarrow +\infty$ assuming $\varphi^{(1)}(s)$ is invertible everywhere, then the equidistribution of $\{\mu_t : t >0\}$ as $t \rightarrow +\infty$ will follow. Therefore, for the latter part of this paper, we will assume that $\varphi^{(1)}(s)$ is invertible for all $s \in I$ and thus $\lambda_t$ is defined to be the normalised Lebesgue measure on the curve $\{z(s)a(t)u(\varphi(s))x: s \in I\}$. Our goal is to show that $\lambda_t \rightarrow \mu_G$ as $t \rightarrow +\infty$.
\end{*remark}
The reason we modify $\mu_t$ to $\lambda_t$ is that it can be easily shown that any limit measure of $\{\lambda_t : t >0\}$ is 
invariant under the unipotent subgroup $W= \{u(t\I_n): t \in \R\}$.
\begin{proposition}[See ~\cite{Shah_1}]
 \label{prop_invariant_under_unipotent}
 Let $t_i \rightarrow +\infty$ be a sequence such that $\lambda_{t_i} \rightarrow \mu_{\infty}$ in weak-$\ast$ topology, then 
 $\mu_{\infty}$ is invariant under $W$-action.
\end{proposition}
\begin{proof}
Given any $f\in C_c(G/\Gamma)$, and $r \in \R$, we have 
$$
\int f(u(r\I_n)x)d\mu_{\infty}  =  \lim_{t_i \rightarrow +\infty} \frac{1}{|I|}\int_{s \in I} f(u(r\I_n)z(s)a(t_i)u(\varphi(s))x) ds.
$$
We want to argue that 
$$u(r\I_n)z(s)a(t_i)u(\varphi(s)) \overset{O(e^{-t_i})}{\approx} z(s+r e^{-t_i})a(t_i)u(\varphi(s+r e^{-t_i})).$$
Since $z(s+r e^{-t_i}) \overset{O(e^{-t_i})}{\approx} z(s)$ for $t_i$ large enough, it suffices to show that 
$$u(r\I_n)z(s)a(t_i)u(\varphi(s)) \overset{O(e^{-t_i})}{\approx} z(s)a(t_i)u(\varphi(s+r e^{-t_i})).$$
In fact,
$$\begin{array}{cl}
   & z(s)a(t_i)u(\varphi(s+r e^{-t_i})) \\
   = & z(s) a(t_i)u(\varphi(s) + r e^{-t_i} \varphi'(s) + \frac{r^2}{2} e^{-2 t_i} \varphi^{(2)}(s')) \\
   = & z(s)u(r \varphi'(s)) u(\frac{r^2}{2} e^{- t_i} \varphi^{(2)}(s')) a(t_i) u(\varphi(s)).
  \end{array}
$$
By the definition of $z(s)$, we have the above is equal to
$$ u(\frac{r^2}{2} e^{-t_i} z(s)\cdot\varphi^{(2)}(s'))u(r \I_n)z(s)a(t_i)u(\varphi(s)).$$
 This shows that  
$$u(r\I_n)z(s)a(t_i)u(\varphi(s)) \overset{O(e^{-t_i})}{\approx} z(s+ r e^{-t_i})a(t_i)u(\varphi(s+r e^{-t_i})).$$
Therefore, for any $\delta >0$, there is some constant $T >0$, such that for $t_i \geq T$,
$$u(r\I_n)z(s)a(t_i)u(\varphi(s)) \overset{\delta}{\approx} z(s+ r e^{-t_i})a(t_i)u(\varphi(s+r e^{-t_i})).$$
Given $\epsilon >0$, we choose $\delta>0$ such that whenever $x_1 \overset{\delta}{\approx} x_2$, we have 
$f(x_1) \overset{\epsilon}{\approx} f(x_2)$. Let $T>0$ be the constant as above. Then from the above argument, for $t_i > T$, we have
$$f(u(r \I_n) z(s) a(t_i)u(\varphi(s))x) \overset{\epsilon}{\approx} f(z(s+ r e^{-t_i}) a(t_i) u(\varphi(s+ r e^{-t_i}))x),$$
therefore,
$$\begin{array}{cl} & \frac{1}{|I|}\int_{s\in I} f(u(r \I_n) z(s) a(t_i)u(\varphi(s))x)\dd s \\
                  \overset{\epsilon}{\approx} & \frac{1}{|I|} \int_{s\in I} f(z(s+ r e^{-t_i}) a(t_i) u(\varphi(s+ r e^{-t_i}))x) \dd s \\
                  = & \frac{1}{|I|} \int_{a+ r e^{-t_i}}^{b+ r e^{-t_i}} f(z(s)a(t_i)u(\varphi(s))x) \dd s.
\end{array}$$
It is easy to see that when $t_i$ is large enough,
$$\frac{1}{|I|} \int_{a+ r e^{-t_i}}^{b+ r e^{-t_i}} f(z(s)a(t_i)u(\varphi(s))x) \dd s \overset{\epsilon}{\approx} \frac{1}{|I|} \int_{a}^{b} f(z(s)a(t_i)u(\varphi(s))x) \dd s.$$
Therefore, for $t_i$ large enough,
$$\int f(u(r \I_n)x) \dd \lambda_{t_i} \overset{2\epsilon}{\approx} \int f(x) \dd \lambda_{t_i}.$$
Letting $t_i \rightarrow +\infty$, we have 
$$\int f(u(r \I_n)x) \dd \mu_{\infty} \overset{2\epsilon}{\approx} \int f(x) \dd \mu_{\infty}.$$
Since the above approximation is true for arbitrary $\epsilon >0$, we have that 
$\mu_{\infty}$ is $W$-invariant.
\end{proof}
\subsection{Non-divergence of limit measures} We will prove that any limit measure $\mu_{\infty}$ of $\{\lambda_t: t>0\}$ is still a probability measure 
of $G/\Gamma$, i.e., no mass escapes to infinity. 
\par To show the non-divergence of limit measures, it suffices to show the following proposition:
\begin{proposition}[see ~\cite{Shah_1}]
\label{prop_no_escape_mass}
 For any $\epsilon >0$, there exists a compact subset $\mathcal{K}_{\epsilon} \subset G/\Gamma$ such that 
 $\lambda_t(\mathcal{K}_{\epsilon}) \geq 1-\epsilon$ for all $t >0$.
\end{proposition}
\par The proof of the proposition is due to Nimish Shah ~\cite{Shah_1}. Here we just modify the proof to fit our needs.
\begin{definition}
\label{def_representation_G}
 Let $\mathfrak{g}$ denote the Lie algebra of $G$, and denote $d = \dim G$. We define
 $$V = \bigoplus_{i=1}^{d} \bigwedge^i \mathfrak{g},$$
 and let $G$ act on $V$ via $\bigoplus_{i}^d \bigwedge^i \mathrm{Ad}(\cdot)$. This defines a linear representation of $G$:
 $$G \rightarrow \mathrm{GL}(V).$$
\end{definition}
The following theorem due to Kleinbock and Margulis ~\cite{Klein_Mar} is the basic tool to prove the non-divergence of limit measures:
\begin{theorem}[see ~\cite{Dani} and ~\cite{Klein_Mar}]
 \label{non_divergence_theorem}
 Fix a norm $\|\cdot \|$ on $V$. There exist finitely many vectors  $v_1, v_2, \dots , v_r \in V$ such that for each 
 $i=1,2,\dots, r$, the orbit $\Gamma v_i$ is discrete, and the
following holds: for any $\epsilon >0$ and $R > 0$, there exists a
compact set $K\subset G/\Gamma$ such that for any $t >0$ and any subinterval $J\subset I$, one of the
following holds:
\begin{enumerate}[label=\textbf{S.\arabic*}]
\item There exist $\gamma \in \Gamma$ and $j\in \{1,\dots , r\}$
such that $$\sup_{s\in J} \| a(t)u(\varphi(s)) g \gamma v_j \| < R.$$
\item $$|\{ s\in J:  a(t)u(\varphi(s))x \in K\}| \geq (1-\epsilon)|J|.$$
\end{enumerate}
\end{theorem}
\begin{*remark}
 The proof for polynomial curves is due to Dani ~\cite{Dani}, 
 the proof for analytic curves is due to Kleinbock and Margulis ~\cite{Klein_Mar}. The crucial part to prove the above theorem is to find constants 
 $C>0$ and $\alpha>0$ such that in this particular representation, all the coordinate functions of $a(t)u(\varphi(\cdot))$ 
 are $(C, \alpha)$-good. Here a function $f: I \rightarrow \R$ is called
 $(C,\alpha)$-good if for any subinterval $J \subset I$ and any $\epsilon >0$, the following holds:
 $$|\{s\in J: |f(s)|<\epsilon\}| \leq C\left(\frac{\epsilon}{\sup_{s\in J}|f(s)|}\right)^{\alpha} |J|.$$
\end{*remark}

\begin{notation}
 Let $F$ be a Lie group, and $V$ be a finite dimensional linear representation of $F$. Then for a one-parameter diagonal subgroup 
 $A=\{a(t): t \in \R\}$ of $F$, we could decompose $V$ as direct sum of eigenspaces of $A$, i.e.,
 $$V = \bigoplus_{\lambda \in \R} V^{\lambda}(A),$$
 where $V^{\lambda}(A) = \{v\in V: a(t)v = e^{\lambda t} v\}$.
 \par We define
 $$V^{+}(A) = \bigoplus_{\lambda >0} V^{\lambda}(A),$$
 $$V^{-}(A) = \bigoplus_{\lambda <0 } V^{\lambda}(A),$$
 and similarly,
 $$V^{+0}(A) = V^{+}(A) + V^0(A),$$
 $$V^{-0}(A) = V^{-}(A) + V^0(A) .$$
 For a vector $v \in V$, we denote by $v^{+}(A)$ ($v^{-}(A)$, $v^0(A)$, $v^{+0}(A)$ and $v^{-0}(A)$ respectively) the projection of $v$ onto $V^{+}(A)$ ($V^{-}(A)$, $V^0(A)$, $V^{+0}(A)$ and 
 $V^{-0}(A)$ respectively).
\end{notation}

\par The following basic lemma on representations of $\mathrm{SL}(2,\R)$ due to Nimish Shah is crucial in the proof of Proposition \ref{non_divergence_theorem}:
\begin{lemma}{(See ~\cite[Lemma 2.3]{Shah_1})}
\label{lemma_non_divergent}
\par Let $V$ be a representation of $\mathrm{SL}(2,\R)$, fix a norm $\|\cdot\|$ on $V$.
We define 
$$A = \left\{a(t)= \begin{bmatrix}e^{t} & \\ & e^{-t}\end{bmatrix}: t \in \R \right\},$$
and
$$U^{+}(A) = \left\{u(t)=\begin{bmatrix}1 & t \\ 0 & 1 \end{bmatrix}: t \in \R \right\}.$$
Then for any $t >0$, there exists a
constant $\kappa=\kappa(t)> 0$ such that for any $v \in V$,
$$
\max\{\|v^+(A)\|,\|(u(t)v)^{+0}(A)\|\} \geq \kappa \|v\|.
$$
\end{lemma}
\par In $H= \mathrm{SL}(2n,\R)$, for any $X \in \mathrm{GL}(n,\R)$, $u(X) \in \mathrm{SL}(2,X) \subset H$ corresponds to 
$\begin{bmatrix}1 & 1 \\ 0 & 1\end{bmatrix}$ in $\mathrm{SL}(2,\R)$, where 
$\mathrm{SL}(2,X) \cong \mathrm{SL}(2,\R)$ is defined in Subsection \ref{preliminaries}. Lemma \ref{lemma_non_divergent} easily implies the following:
\begin{corollary}{(See ~\cite[Corollary 2.4]{Shah_1})}
\label{corollary_of_lemma_nondivergent}
 Let $V$ be a linear representation of $H=\mathrm{SL}(2n,\R)$, fix a norm $\|\cdot\|$ on $V$. Let $A=\{a(t): t \in \R\} \subset H$ be the 
 one-parameter diagonal subgroup as in Section 1.
Then given a compact set $\mathcal{F}\subset \mathrm{GL}(n,\R)$, there exists
a constant $\kappa >0$ such that for any $ X \in \mathcal{F}$ and any
$v\in V$,
$$
\max \{\|v^+(A)\|,\|(u(X)v)^{+0}(A)\|\}\geq \kappa \|v\|,
$$
In particular, for any $t >0$, any $X\in \mathcal{F}$ and any
$v\in V$,
$$
\max \{\|a(t)v\|,\|a(t)u(X)v\|\} \geq \kappa \|v\|.
$$
\end{corollary}

\begin{proof}[Proof of Proposition \ref{prop_no_escape_mass}]
  Fix $s_1 \in I$ and subinterval $J_{s_0}$ such that $\varphi(s)- \varphi(s_1) \subset \mathrm{GL}(n,\R)$ for $s \in J_{s_0}$, and a compact subset $\mathcal{F} \subset \mathrm{GL}(n,\R)$ containing 
  $\{\varphi(s)-\varphi(s_1): s \in J_{s_0}\}$ 
  and let $\kappa >0$ be the constant provided 
  in Corollary \ref{corollary_of_lemma_nondivergent} with respect to $\mathcal{F}$. 
\par Now for any $\epsilon>0$ and $R >0$, by Theorem \ref{non_divergence_theorem}, there exists a 
 compact subset $K\subset G/\Gamma$, such that for any $t >0$, one of the
following holds:
\begin{enumerate}[label=\textbf{S.\arabic*}]
\item \label{divergence_condition} There exist $\gamma \in \Gamma$ and $j\in \{1,\dots , r\}$
such that $$\sup_{s\in I} \| a(t)u(\varphi(s)) g \gamma v_j \| < R.$$
\item \label{non_divergence_condition} $$|\{ s\in I:  a(t)u(\varphi(s))x \in K\}| \geq (1-\epsilon)|I|.$$
\end{enumerate}
 Now fix $s_2 \in J$ and denote $X = \varphi(s_2) - \varphi(s_1)$. 
 Then because $\Gamma v_i$ is discrete in $V\setminus\{\mathbf{0}\}$, there exists
 a uniform constant $r>0$ such that 
 $$\|u(\varphi(s_1))g \gamma v_i\| \geq r,$$
 for any $v_i$ and $\gamma \in \Gamma$. Applying Corollary \ref{corollary_of_lemma_nondivergent} with $v$ replaced by $u(\varphi(s_1))g \gamma v_i$, 
 we get for any $v_i$, $\gamma \in \Gamma$ and $t > 0$,
 $$\sup_{s \in I} \|a(t)u(\varphi(s))g\gamma v_i\| \geq \kappa r.$$
 If we choose $R < \kappa r$, then case \ref{divergence_condition} above can not hold, 
 this shows that 
 $$|\{s\in I: a(t)u(\varphi(s))x \in K\}| \geq (1-\epsilon)|I|.$$
 Let $\mathcal{K}_{\epsilon} = MK$, since $z(s)\in M$, we have 
 $$|\{s\in I: z(s)a(t)u(\varphi(s))x \in \mathcal{K}_{\epsilon}\}| \geq (1-\epsilon)|I|,$$
 i.e., $\lambda_t(\mathcal{K}_{\epsilon}) \geq 1-\epsilon$ for all $t >0$.
 \par This completes the proof.
\end{proof}
\begin{*remark} Proposition \ref{prop_no_escape_mass} implies that any limit measure $\mu_{\infty}$ of $\{\lambda_t : t >0\}$ 
is still a probability measure. In fact, suppose $\lambda_{t_i} \rightarrow \mu_{\infty}$ along some subsequence $t_{i} \rightarrow +\infty$. Then by Proposition \ref{prop_no_escape_mass}, for any $\epsilon >0$, there exists a compact subset $\mathcal{K} \subset G/\Gamma$ such that $\lambda_t(\mathcal{K}) > 1 - \epsilon$ for all $t >0$. Thus, $\mu_{\infty}(G/\Gamma) \geq \mu_{\infty}(\mathcal{K}) > 1 - \epsilon$. Letting $\epsilon \rightarrow 0$, we get 
$\mu_{\infty}(G/\Gamma) \geq 1$.
\end{*remark}
\section{Ratner's theorem and linearization technique}
Take any convergent subsequence $\lambda_{t_i} \rightarrow \mu_{\infty}$. By Proposition \ref{prop_invariant_under_unipotent} and Proposition 
\ref{prop_no_escape_mass}, $\mu_{\infty}$ is a $W$-invariant probability measure on $G/\Gamma$. 
\par In order to apply Ratner's theorem and linearization technique, we need to introduce some notations at first.
\begin{definition}
 Let $\mathcal{L}$ be the collection of analytic subgroups $L < G$ such that $L\cap \Gamma$ is a lattice of $L$. one can prove that
$\mathcal{L}$ is a countable set (see ~\cite{Ratner}).
\par For $L\in \mathcal{L}$, define:
$$N(L,W):= \{g\in G: g^{-1}Wg\subset L\},$$
and 
$$S(L,W):= \bigcup_{L'\in \mathcal{L}, L' \subsetneq L} N(L', W).$$
\end{definition}
\par We recall the Ratner's measure classification theorem as follows:
\begin{theorem}[See ~\cite{Ratner}]
\label{ratner} Given the $W$-invariant probability measure $\mu$ on
$G/\Gamma$, there exists $L \in \mathcal{L}$ such that
\begin{equation}
\begin{array}{ccc}
\mu(\pi(N(L,W)))>0 & \text{ and } & \mu(\pi(S(L,W)))=0
\end{array}
\end{equation}
Moreover, almost every $W$-ergodic component of $\mu$ on
$\pi(N(L,W))$ is a measure of the form $g\mu_L$ where $g\in
N(L,W)\backslash S(L,W)$, $\mu_L$ is a finite $L$-invariant measure
on $\pi(L)$, and $g\mu_L(E)=\mu_L (g^{-1}E)$ for all Borel sets
$E\subset G/\Gamma$. In particular, if $L\lhd  G$, then $\mu$ is
$L$-invariant.
\end{theorem}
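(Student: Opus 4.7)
The plan is to derive the statement from Ratner's measure classification theorem for the unipotent group $W$, combined with the countability of $\mathscr{H}$ and the linearization techniques of Mozes and Shah. By Ratner's theorem, every $W$-ergodic $W$-invariant probability measure on $L/\Lambda$ is algebraic: it takes the form $g\mu_H$ for some $H \in \mathscr{H}$ with $g \in N(H,W)$. I would begin by forming the ergodic decomposition $\mu = \int \mu_\xi\, d\tau(\xi)$ and attaching to each ergodic component $\mu_\xi$ the associated subgroup $H_\xi \in \mathscr{H}$, chosen of minimal dimension for which $\mu_\xi = g_\xi \mu_{H_\xi}$.

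Since $\mathscr{H}$ is countable, at least one $H \in \mathscr{H}$ must account for a positive $\tau$-measure of parameters $\xi$ with $H_\xi = H$, and I would fix such an $H$. The first conclusion $\mu(\pi(N(H,W))) > 0$ then follows automatically from $g_\xi \in N(H,W)$ for this positive-measure family of components, since the supports of these components lie in $\pi(N(H,W))$. The main obstacle is the second conclusion $\mu(\pi(S(H,W))) = 0$: this requires, for each proper $F \subsetneq H$ in $\mathscr{H}$, ruling out the possibility that a positive-measure family of representatives $g_\xi$ lies in $N(F,W)$. Here I would invoke the Mozes--Shah identity $N(H,W) \cap N(H,W)\gamma \subset S(H,W)$ for $\gamma \in \Lambda \setminus N^1_L(H)$, together with the discreteness of $\Lambda p_H$ in $V$, to translate the question into a linearized statement about discrete orbits in the representation space $V$; minimality of $\dim H_\xi$ then forbids any component from actually charging $S(H,W)$.

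The explicit description of the ergodic components on $\pi(N(H,W)) \setminus \pi(S(H,W))$ is a direct corollary of Ratner's classification once we know each relevant representative $g_\xi$ lies outside $S(H,W)$, together with the defining change-of-variables $g\mu_H(E) = \mu_H(g^{-1}E)$. Finally, for the normality case, if $H \lhd L$ then for any $h \in H$ and any ergodic component $g_\xi \mu_H$ one computes $h \cdot g_\xi \mu_H = g_\xi (g_\xi^{-1} h g_\xi) \mu_H = g_\xi \mu_H$, since $g_\xi^{-1} h g_\xi \in H$ and $\mu_H$ is $H$-invariant. Integrating over the ergodic decomposition yields $H$-invariance of $\mu$ itself, completing the proof.
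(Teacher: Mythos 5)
First, a point of reference: the paper does not prove this theorem at all --- it is quoted from Ratner (in the $N(H,W)$/$S(H,W)$ formulation of Dani--Margulis and Mozes--Shah), so there is no internal proof to compare yours against. Your proposal is a sketch of the standard derivation, and most of its skeleton is sound: Ratner's classification of $W$-ergodic components as translates $g\mu_H$ with $H\in\mathscr{H}$ and $g\in N(H,W)$, the ergodic decomposition, countability of $\mathscr{H}$ to extract an $H$ carrying positively many components (which does give $\mu(\pi(N(H,W)))>0$, since the full support $g_\xi\pi(H)$ of each such component lies in $\pi(N(H,W))$), and the conjugation computation $h\cdot g_\xi\mu_H=g_\xi(g_\xi^{-1}hg_\xi)\mu_H=g_\xi\mu_H$ for the normal case.

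The genuine gap is the step $\mu(\pi(S(H,W)))=0$, which you dispose of with ``minimality of $\dim H_\xi$ then forbids any component from actually charging $S(H,W)$.'' Your $H$ is selected only by the condition that the fibre $\{\xi: H_\xi=H\}$ has positive $\tau$-measure; but $\pi(S(H,W))=\bigcup\{\pi(N(F,W)): F\in\mathscr{H},\ F\subsetneq H\}$ can perfectly well be charged by the \emph{other} components, those with $H_{\xi'}\neq H$. Such a component is supported on $g_{\xi'}\pi(H_{\xi'})$, and nothing in per-component minimality prevents this support from meeting $\pi(N(F,W))$ in positive $\mu_{\xi'}$-measure for some $F\subsetneq H$, since $F$ need bear no containment relation to $H_{\xi'}$. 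The standard repair is to make the minimization global rather than component-wise: among all $H'\in\mathscr{H}$ with $\mu(\pi(N(H',W)))>0$ (a nonempty collection, by Ratner plus countability of $\mathscr{H}$), choose $H$ of minimal dimension. Every $F\in\mathscr{H}$ with $F\subsetneq H$ is a connected proper subgroup, hence $\dim F<\dim H$, so minimality forces $\mu(\pi(N(F,W)))=0$ for each such $F$, and $\mu(\pi(S(H,W)))=0$ follows from countability of the union. With this choice the machinery you invoke at that point --- the inclusion $N(H,W)\cap N(H,W)\gamma\subset S(H,W)$ and the discreteness of $\Lambda p_H$ --- is not needed to prove the theorem; in the paper those tools serve to \emph{exploit} its conclusion later (in the proof of Proposition \ref{main_proposition}), not to establish it.
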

\par If $\mu_{\infty}=\mu_G$, then there is nothing to prove. So we may assume $\mu_{\infty}\neq \mu_G$. Then by Ratner's Theorem, 
there exists $L \in \mathcal{L}$ such that $\mu_{\infty}(\pi(N(L,W))) > 0$ and $\mu_{\infty}(\pi(S(L,W))) =0$. Now we start to apply the 
linearization technique.
\par We start with some basic notations:
\begin{definition}
Let $V$ be the finitely dimensional representation of $G$ defined as in Definition \ref{def_representation_G}, for $L \in \mathcal{L}$,
we choose a basis $\mathfrak{e}_1, \mathfrak{e}_2, \dots, \mathfrak{e}_l$ of the Lie algebra $\mathfrak{l}$ of $L$, and define 
$$p_L = \wedge_{i=1}^l \mathfrak{e}_i \in V.$$
Define 
$$\Gamma_L := \left\{\gamma\in \Gamma: \gamma p_L = \pm p_L \right\}.$$
From the action of $G$ on $p_L$, we get a map:
$$\begin{array}{l} \eta:  G \rightarrow V,  \\
                     g \mapsto g p_L .
   \end{array}
$$
We define $\mathcal{A}$ to be the Zariski closure of $\eta(N(L,W))$. and for any compact subset $\mathcal{D} \subset \mathcal{A}$, we define
$$S(\mathcal{D}) := \left\{g \in N(L,W):  \eta(g\gamma) \in \mathcal{D} \text{ for some } \gamma \in \Gamma\setminus \Gamma_L\right\}.$$
\end{definition}
Concerning $S(\mathcal{D})$, we have the following important propositions:
\begin{proposition}[see Proposition 4.5 of ~\cite{Shah_1}]
\label{injective_prop}
 $S(\mathcal{D}) \subset S(L,W)$ and
$\pi(S(\mathcal{D}))$ is closed in $G/\Gamma$. Moreover, for any
compact set $\mathcal{K} \in G/\Gamma \backslash
\pi(S(\mathcal{D}))$, there exists some neighborhood $\Phi$ of
$\mathcal{D}$ in $V$ such that, for any $g\in G$ and $\gamma_1,
\gamma_2 \in \Gamma$, if $\pi(g)\in \mathcal{K}$ and $\eta(g
\gamma_i) \in \Phi$, $i=1,2$, then $\eta(\gamma_1)=\pm
\eta(\gamma_2)$.
\end{proposition}
\begin{proposition}[see Proposition 4.6 of ~\cite{Shah_1}]
\label{relative_small prop} Given a symmetric compact set
$\mathcal{C}\subset\mathcal{A}$ and $\epsilon > 0$, there exists a
symmetric compact set $\mathcal{D}\subset \mathcal{A}$ containing
$\mathcal{C}$ such that, given a symmetric neighborhood $\Phi$ of
$\mathcal{D}$ in $V$, there exists a symmetric neighborhood $\Psi$
of $\mathcal{C}$ in $V$ contained in $\Phi$ such that for any $t >0$, 
for any $v \in V$, and for any interval
$J\subset I$, one of the following holds:
\begin{enumerate}[label=\textbf{SS.\arabic*}]
\item \label{1} $a(t)u(\varphi(s))v \in \Phi$ for all $s\in J$.
\item \label{2} $|\{s\in J: a(t)u(\varphi(s))v \in \Psi \}|\leq \epsilon |\{s\in J: a(t)u(\varphi(s))v \in \Phi\}|$.
\end{enumerate}
\end{proposition}
\begin{*remark}
 The proof is similar to Theorem \ref{non_divergence_theorem}, and also follows from the fact that all coordinate functions of
 $a(t)u(\varphi(\cdot))v$ are 
 $(C, \alpha)$-good for some constants $C >0$ and $\alpha >0$.
\end{*remark}
\par The following proposition is the 
aim of this section.
\begin{proposition}
 \label{prop_algebraic_condition}
 There exists a $\gamma \in \Gamma$ such that
 $$u(\varphi(s))g\gamma p_L \in V^{-0}(A),$$
 for all $s \in I$.
\end{proposition}

\begin{proof}
 Take a compact subset $C \subset N(L,W))\setminus S(L,W)$ such that $\mu_{\infty}(\pi(C)) >c_0 >0$ for some constant $c_0$. Define 
 $\mathcal{C} := \eta(C)\cup ( - \eta(C))$, then $\mathcal{C} \subset \mathcal{A}$ is a compact subset. Choose a 
 compact subset $\mathcal{K}\subset G/\Gamma\setminus \pi(S(L,W))$ containing $\pi(C)$ in its interior. Applying Proposition \ref{relative_small prop},
 we can find a symmetric compact subset $\mathcal{D} \subset \mathcal{A}$ containing $\mathcal{C}$ such that 
 the conclusion of Proposition \ref{relative_small prop} holds for $\mathcal{C}$, $\mathcal{D}$ and some small $0< \epsilon < \frac{c_0}{2}$. 
 Applying Proposition \ref{injective_prop} to 
 $\mathcal{D}$ and $\mathcal{K}$, we have that there exists an open neighborhood $\Phi$ of $\mathcal{D}$ such that the conclusion of Proposition 
 \ref{injective_prop} holds. 
 Choose a neighborhood
 $\Psi$ of $\mathcal{C}$ according to Proposition \ref{relative_small prop}. 
 \par We claim that there exists $\gamma_t \in \Gamma$ such that 
 $$a(t)u(\varphi(I))g \gamma_t p_L \subset \Phi.$$
 For contradiction, we assume it is not the case, i.e., 
 for all $\gamma \in \Gamma$, case \ref{1} in Proposition \ref{relative_small prop} does not hold for 
 $v = g\gamma p_L$ and $J=I$. We define
 $$J_t := \left\{ s\in I: a(t)u(\varphi(s))x \in \mathcal{K} : a(t)u(\varphi(s))g\Gamma p_L \cap \Psi \neq \emptyset  \right\},$$
 then for $t$ large enough, $|J_t| > c_0|I|$. 
 \par By Proposition \ref{injective_prop}, for any $s \in J_t$, , up to $\pm$ sign, there exists unique $\gamma(s) p_L$ such that 
 $a(t)u(\varphi(s))g\gamma(s) p_L \in \Psi$, let $I_{\gamma(s)}$ be the maximal interval $I$ containing $s$ such that 
 $$a(t)u(\varphi(I))g \gamma(s) p_L \subset \Phi.$$
 From Proposition \ref{injective_prop} we know that there is no other $\gamma' p_L$ other than $\pm \gamma(s) p_L$ 
 and $s \in I_{\gamma(s)}\cap J_t$ such that
 $$a(t)u(\varphi(s))g \gamma' p_L \in \Psi.$$
 Therefore $J_t$ is covered by at most countably many intervals $I_{\gamma(s)}$'s which covers the whole interval $I$ at most twice, namely,
 every point belongs to at most two different intervals (this is because for any $s_1< s_2 \in J_t$, then from the above argument, the intersection 
 $I_{\gamma(s_1)}\cap I_{\gamma(s_2)} \subset (s_1, s_2)$).
 Moreover, because case \ref{1} in Proposition \ref{relative_small prop} does not hold, we have that \ref{2} must hold, i.e., 
 $$|J_t \cap I_{\gamma(s)}| < \epsilon |I_{\gamma(s)}|.$$
 This shows that 
 $$|J_t| < 2\epsilon |I|$$
 which contradicts to the fact that $|J_t| > c_0|I|$. This shows the claim.
 \par Since $\Gamma p_L$ is discrete in $V$, one of the following will happen:
 \begin{enumerate}
  \item $\|\gamma_t p_L\| \rightarrow +\infty$ as $t \rightarrow \infty$.
  \item $\gamma_t p_L$ remains the same for all large $t$.
 \end{enumerate}
If case 1 happens, define a unit vector $v_t = \frac{\gamma_t p_L}{\|\gamma_t p_L\|}$ for each $t$, 
then from 
$$a(t)u(\varphi(I)) \gamma(t) p_L \subset \Phi$$
we have there is a constant $R$ such that 
$$\sup_{s \in I}\|a(t)u(\varphi(s))v_t\| \leq \frac{R}{\|\gamma_t p_L\|} \rightarrow 0.$$
Suppose $v_t \rightarrow v_{\infty}$ passing to some subsequence, then we have 
$$\sup_{s \in I} \|a(t)u(\varphi(s))v_{\infty}\| \rightarrow 0,$$
as $t \rightarrow +\infty$. This is impossible according to Corollary \ref{corollary_of_lemma_nondivergent} and condition \ref{2nd} in 
Theorem \ref{theorem_in_diophantine_part}. 
Therefore $\gamma_t p_L = \gamma p_L$ remains the same for all large $t$. This means that for all $t >0$,
$$\sup_{s \in I}\|a(t)u(\varphi(s))g \gamma p_L\| \leq R.$$
This implies that for $v = g \gamma p_L$,
$$u(\varphi(s))v \in V^{-0}(A).$$
\par This completes the proof.
\end{proof}

\section{Conclusion}
In this section we will finish the proof of Theorem \ref{goal_thm}. By the correspondence between homogeneous dynamics and Diophantine approximation
discussed in the introduction, Theorem \ref{theorem_in_diophantine_part} follows from Theorem \ref{goal_thm}.
\par We need the following basic lemma on $\mathrm{SL}(2,\R)$ representations proved in ~\cite{Yang_1}.
\begin{lemma}[See ~\cite{Yang_1}]
 \label{basic_lemma}
 Let $V$ be a finite dimensional linear representation of $\mathrm{SL}(2,\R)$. Denote 
 $$
 A := \left\{a(t) := \begin{bmatrix}e^t & \\ & e^{-t}\end{bmatrix}: t \in \R\right\},
 $$
 and 
 $$
 U := \left\{u(s) := \begin{bmatrix}1 & s \\ 0 & 1\end{bmatrix}\right\}.
 $$
 Suppose there is a nonzero vector $v \in V^{-0}(A)$ satisfying 
 $$u(r)v \in V^{-0}(A),$$
 for some $r \in \R$, then $(u(r)v)^0(A) = \sigma v^0(A),$
 where $\sigma$ denotes the matrix
 $$\sigma = \begin{bmatrix}0 & -1 \\ 1 & 0\end{bmatrix}.$$
\end{lemma}
\begin{proof}[Proof of Theorem \ref{goal_thm}]
We start with the linear algebraic condition we get in Proposition \ref{prop_algebraic_condition}:
$$u(\varphi(s))v \in V^{-0}(A),$$
for all $s \in I$.

\begin{claim}
 $(u(\varphi(s))v)^{0}(A)$ is invariant under the unipotent flow $\{u(r \varphi^{(1)}(s)): r \in \R\}$.
 
\end{claim}
\begin{proof}[Proof of the claim:]
On the one hand, since $u(\varphi(s))v \in V^{-0}(A)$ for all $s \in I$, for any fixed $r \in \R$, 
$$\lim_{t\rightarrow +\infty} a(t)u(\varphi(s+{r/e^{-2t}}))v = \lim_{t\rightarrow +\infty} (u(\varphi(s+{r/e^{-2t}}))v)^0(A) = (u(\varphi(s))v)^0(A).$$
On the other hand, 
$$
\begin{array}{cl}
 & \lim_{t \rightarrow +\infty} a(t)u(\varphi(s+{r/e^{-2t}}))v \\
 =& \lim_{t\rightarrow +\infty} a(t)u(\varphi(s)+ r\varphi^{(1)}/e^{-2t} + o(e^{-4t}))v \\
 = & \lim_{t\rightarrow + \infty} a(t)u(r\varphi^{(1)}/e^{-2t} + o(e^{-4t})a(-t) a(t)u(\varphi(s))v \\
 = & \lim_{t\rightarrow + \infty} u(r\varphi^{(1)}(s)+ o(e^{-2t})) a(t)u(\varphi(s))v \\
 = & u(r\varphi^{(1)}(s))(u(\varphi(s))v)^{0}(A).
\end{array}
$$
This implies $(u(\varphi(s))v)^0(A)=u(r\varphi^{(1)}(s))(u(\varphi(s))v)^{0}(A)$ for all $r \in \R$.
\par This proves the claim.
\end{proof}
Fix any $s_0 \in I$, since $\varphi^{(1)}(s_0)$ is invertible, there exists a subinterval $J_{s_0} \subset I$, such that 
$\varphi(s)-\varphi(s_0)$ is invertible for all $s \in J_{s_0}$. Let us denote $X(s)= \varphi(s)-\varphi(s_0)$, and consider the subgroup
$\mathrm{SL}(2,X(s))\cong \mathrm{SL}(2,\R)$. Notice that in $\mathrm{SL}(2,X(s))$, 
$$\sigma(X(s)) =\begin{bmatrix} & -X(s) \\ X^{-1}(s) & \end{bmatrix}$$
corresponds to 
$$\sigma = \begin{bmatrix} 0 & -1 \\ 1 & 0\end{bmatrix} \in \mathrm{SL}(2,\R).$$ 
Applying Lemma \ref{basic_lemma} with $\mathrm{SL}(2,\R)$ replaced by $\mathrm{SL}(2,X(s))$,
$v$ replaced by $u(\varphi(s_0))v$ and $u(r)$ replaced by $u(X(s))$, we conclude that 
$$(u(\varphi(s))v)^0(A) = \sigma(X(s))(u(\varphi(s_0))v)^0(A).$$
Let us denote $w = (u(\varphi(s_0))v)^0(A)$ and define 
$$\mathcal{S}:= \{X \in \M(n\times n, \R): u^{-}(X)w = w \}.$$
It is clear that $\mathcal{S}$ is a subspace of $\M(n\times n, \R)$.
\begin{claim} 
 $\mathcal{S}$ is a proper subspace of $\M(n\times n, \R)$.
\end{claim}
\begin{proof}[Proof of the claim:]
Suppose not, then 
$w$ is fixed by the whole horospherical subgroup $U^{-}(A)$. $w$ is also fixed by $A$ since it is in $V^0(A)$. This implies that 
$w$ is fixed by $H = \mathrm{SL}(2n,\R)$. Let us denote $u(\varphi(s_0))v = w + w^{-}$, where $w^{-} = u(\varphi(s_0))v - w \in V^{-}(A)$.
We claim that $w^{-} = \mathbf{0}$. In fact, if $w^{-} \neq \mathbf{0}$, then for $s \in J$,
$$u(X(s))w^{-} = u(\varphi(s))v - u(X(s))w = u(\varphi(s))v - w \in V^{-0}(A).$$
This contradicts to Lemma \ref{lemma_non_divergent} with $\mathrm{SL}(2,\R)$ replaced by $\mathrm{SL}(2,X(s))$, $u(t)$ replaced by $u(X(s))$, 
and $v$ replaced by $w^{-}$. This shows that $w^{-}=\mathbf{0}$. Thus, $u(\varphi(s_0))v = w$ is fixed by the whole 
group $H=\mathrm{SL}(2n,\R)$. Then $v =g\gamma p_L$ is fixed by $H$. Hence $p_L$
is fixed by the action of
$\gamma^{-1}g^{-1}Hg\gamma$. Thus
$$
\begin{array}{rcl}
\Gamma p_L & = & \overline{\Gamma p_L} \text{ since } \Gamma p_L
 \text{ is discrete} \\
 & = & \overline{\Gamma \gamma^{-1} g^{-1}Hg\gamma
 p_L} \\ & = & \overline{\Gamma g^{-1}Hg\gamma
 p_L} \\ & = & G g\gamma p_L \text{ since } \overline{Hg
 \Gamma}=G \\
  & = & G p_L.
 \end{array}
$$
This implies $G_0 p_L = p_L$ where $G_0$ is the connected component
of $e$. In particular, $\gamma^{-1} g^{-1}H g
\gamma\subset G_0$ and $G_0 \subset N^1_{G}(L)$. By ~\cite[Theorem
2.3]{Shah_3}, there exists a closed subgroup $F_1 \subset
N^1_{G}(L)$ containing all $\mathrm{Ad}$-unipotent one-parameter
subgroups of $G$ contained in $N^1_{G}(L)$ such that $F_1 \cap
\Gamma$ is a lattice in $F_1$ and $\pi (F_1)$ is closed. If we put
$F= g\gamma F_1 \gamma^{-1} g^{-1}$, then $H\subset
F$ since $H$ is generated by its unipotent
one-parameter subgroups. Moreover, $Fx =g \gamma \pi(F_1)$ is closed
and admits a finite $F$-invariant measure. Then since
$\overline{Hx}=G/\Gamma$, we have $F=G$. This implies
$F_1 = G$ and thus $L \lhd G$. Therefore $N(L,W)=G$. In particular,
$W\subset L$, and thus $L\cap H$ is a normal subgroup
of $H$ containing $W$. Since $H$ is a
simple group, we have that $H \subset L$. Since $L$ is a
normal subgroup of $G$ and $\pi(L)$ is a closed orbit with finite
$L$-invariant measure, every orbit of $L$ on $G/\Gamma$ is also
closed and admits a finite $L$-invariant measure, in particular,
$Lx$ is closed. But since $Hx$ is dense in
$G/\Gamma$, $Lx$ is also dense. This shows that $L=G$, which
contradicts to our hypothesis that $\mu_{\infty} \neq \mu_G$. This proves the claim.
\end{proof}
We fix an inner product $\langle , \rangle$ on $\M(n\times n, \R)$ and a nonzero vector $\mathbf{Y} \in \M(n\times n , \R)$ such that 
any $X \in \mathcal{S}$ satisfies $\langle X , \mathbf{Y} \rangle =0$.
\par We have proved that $(u(\varphi(s))v)^0(A)$ is fixed by $\{u(r\varphi^{(1)}(s)): r \in \R\}$. Therefore, $w= (u(\varphi(s_0))v)^{0}(A)$ is fixed by 
$$(\sigma(X(s)))^{-1}u(\varphi^{(1)}(s_0))\sigma(X(s))= \begin{bmatrix}\I_n &  \\ H(s) 
 & \I_n \end{bmatrix},$$
 where $H(s)= - X(s)^{-1} \varphi^{(1)}(s) X(s)^{-1} .$
 This means that $\langle - X(s)^{-1} \varphi^{(1)}(s) X(s)^{-1} , \mathbf{Y}\rangle =0$. Note that 
 $$ ((X(s))^{-1})^{(1)} = - X(s)^{-1} \varphi^{(1)}(s) X(s)^{-1}.$$ 
 This implies that $\langle (X(s))^{-1}, \mathbf{Y} \rangle$ is a constant, i.e., $\{X(s)^{-1} = (\varphi(s)- \varphi(s_0))^{-1}: s \in J\}$ is contained in a proper affine 
 subspace of $\M(n\times n, \R)$. Because this holds for arbitrary $s_0 \in I$, we get a contradiction to condition \ref{2nd} in 
 Theorem \ref{theorem_in_diophantine_part}.
 \par This completes the proof of Theorem \ref{goal_thm}.
\end{proof}

\appendix
\section{Relation between the condition given in ~\cite{ABRS} and condition \ref{2nd} in Theorem \ref{theorem_in_diophantine_part}} \label{App:AppendixA}
\par We will discuss the condition given in ~\cite{ABRS} and its relation with the 
condition \ref{2nd} in Theorem \ref{theorem_in_diophantine_part}. Because in this article we only consider the case 
$m=n$, we only discuss this special case here.
\par We denote $M(s) = [\I_n, \varphi(s) ] \in \M(n\times 2n,\R)$. Given a subspace $W$ and $0<r < \frac{\dim W}{2}$, we define 
the pencil $\mathcal{P}_{W,r}$ to be 
$$\mathcal{P}_{W,r} := \{M \in \M(n\times 2n,\R) : \dim M W =r\}.$$
In ~\cite{ABRS}, the following theorem is announced: if a submanifold is not contained in any of these pencils defined above,
then the submanifold is extremal. In our setup, it says that if the curve $\{[\I_n,\varphi(s)]: s \in I\}$ is not contained in any pencil 
$\mathcal{P}_{W,r}$, then the curve is extremal. It is easy to see that if $W$ is a rational subspace, then $\mathcal{P}_{W,r}$ is not extremal.
So this condition is almost optimal.
\begin{claim}
If the curve $\varphi(I)$ satisfies that for some $s_0 \in I$, $\varphi(s)-\varphi(s_0)$ is invertible for $s$ in a subinterval
$J$ of $I$, then the curve $\{[\I_n, \varphi(s)]: s \in I\}$ is not contained in any pencil $\mathcal{P}_{W,r}$.
\end{claim}
\begin{proof}[Proof of the claim:]
 Suppose not, then the curve $\{[\I_n, \varphi(s)]: s \in I\}$ is contained in some pencil $\mathcal{P}_{W,r}$ where $r < \frac{\dim W }{2}$.
 This means that if we denote $M(s) = [\I_n, \varphi(s)]$, the intersection of $W$ and the kernel of $M(s)$ has dimension greater than 
 $\frac{\dim W}{2}$, i.e., 
 $$\dim \mathrm{Ker} (M(s)) \cap W > \frac{\dim W}{2}.$$
 Let us denote $W(s) := \mathrm{Ker} (M(s)) \cap W$, then for $s_1\neq s_2 \in I$, $W(s_1)\cap W(s_2) \neq \{\mathbf{0}\}$ because the sum of 
 their dimensions is greater than $\dim W$. This means that the intersection $\mathrm{Ker} (M(s_1))\cap \mathrm{Ker} (M(s_2)) \neq \{\mathbf{0}\}$.
 It is easy the see that the kernel of $M(s)$ is $\{(-\varphi(s)w, w): w \in \R^n\}$, so there exist $w_1 , w_2 \in \R^n\setminus\{\mathbf{0}\}$ such that
 $$(-\varphi(s_1)w_1, w_1) = (-\varphi(s_2)w_2, w_2),$$
 this implies that $w_1=w_2$, and $-\varphi(s_1)w_1 = -\varphi(s_2)w_2$. Therefore $(\varphi(s_1)-\varphi(s_2))w_1 = \mathbf{0}$. This shows that 
 $\varphi(s_1)-\varphi(s_2)$ is not invertible, for any $s_1\neq s_2 \in I$. This gives a contradiction.
\end{proof}
This shows that condition ~\ref{2nd} is stronger than the condition given in ~\cite{ABRS}.

\bibliography{reference}{}
\bibliographystyle{plain}

\end{document}